\newtheorem{proposition}{Proposition}[section]
  \newtheorem{theorem}[proposition]{Theorem}
  \newtheorem{corollary}[proposition]{Corollary}
  \newtheorem{lemma}[proposition]{Lemma}
\theoremstyle{remark}
\newcommand{\cst}{\ifmmode\mathrm{C}^*\else{$\mathrm{C}^*$}\fi}
\newcommand{\st}{\;\vline\;}
\newcommand{\CC}{\mathbb{C}}
\newcommand{\GG}{\mathbb{G}}
\newcommand{\tens}{\otimes}
\newcommand{\id}{\mathrm{id}}
\newcommand{\comp}{\circ}
\newcommand{\ph}{\varphi}
\newcommand{\rh}{\varrho}
\newcommand{\I}{\mathds{1}}
\newcommand{\HH}{\mathbb{H}}
\newcommand{\cG}{\mathscr{G}}
\newcommand{\cF}{\mathscr{F}}
\newcommand{\sC}{\mathsf{C}}
\newcommand{\sA}{\mathsf{A}}
\newcommand{\cA}{\mathscr{A}}
\newcommand{\cH}{\mathscr{H}}
\newcommand{\h}{\boldsymbol{h}}
\newcommand{\is}[2]{\left\langle#1\,\vline\,#2\right\rangle}
\newcommand{\hh}[1]{\widehat{#1}}
\DeclareMathOperator{\C}{C}
\DeclareMathOperator{\cL}{\mathscr{L}}
\DeclareMathOperator{\Pol}{Pol}
\numberwithin{equation}{section}
\author{Pawe{\l} Kasprzak}
\address{Department of Mathematical Methods in Physics, Faculty of Physics, University of Warsaw, Poland}
\email{pawel.kasprzak@fuw.edu.pl}
\author{Piotr M.~So{\l}tan} 
\address{Department of Mathematical Methods in Physics, Faculty of Physics, University of Warsaw, Poland}
\email{piotr.soltan@fuw.edu.pl}
\author{Stanis\l{}aw L.~Woronowicz}
\address{Institute of Mathematics, University of Bialystok, Poland and Department of Mathematical Methods in Physics, Faculty of Physics, University of Warsaw, Poland}
\email{stanislaw.woronowicz@fuw.edu.pl}
\thanks{Supported by National Science Centre (NCN) grant no.~2011/01/B/ST1/05011}
\title{Quantum automorphism groups of finite quantum groups are classical}
\begin{document}

\begin{abstract}
In a recent paper of Bhowmick, Skalski and So\l{}tan the notion of a quantum group of automorphisms of a finite quantum group was introduced and, for a given finite quantum group $\GG$, existence of the universal quantum group acting on $\GG$ by automorphisms was proved. We show that this universal quantum group is in fact a classical group. The key ingredient of the proof is the use of multiplicative unitary operators, and we include a thorough discussion of this notion in the context of finite quantum groups.
\end{abstract}

\maketitle

\section{Finite quantum groups and associated multiplicative unitaries}

In the literature on non-commutative geometry and quantum groups the definition of a quantum group is still not established. Some authors prefer the point of view that a quantum group is nothing else than a Hopf algebra or an appropriate topological analog of a Hopf algebra. Others make a point of explicitly invoking the duality between spaces and commutative algebras and say that a quantum group is an object $\GG$ of the category dual to that of a certain class of algebras (say \cst-algebras) with additional properties. These properties are only expressible in therms of the object $\GG$ viewed back in the category of algebras and written as e.g.~$\C(\GG)$. Then the fact that $\GG$ is a ``quantum group'' amounts to a Hopf algebra-like structure on $\C(\GG)$. 

In this paper we will try to stay on middle ground between these approaches. A \emph{finite quantum group} will simply be a finite-dimensional Hopf $*$-algebra $(\cA,\Delta)$ over $\CC$ such that $\cA$ admits a faithful positive functional. In this situation $\cA$ is in fact a \cst-algebra, so in particular it is semisimple. On the other hand the compact quantum group of automorphisms of $(\cA,\Delta)$ will be denoted by $\HH$ and the notation will be that of e.g.~\cite{BS,bhss}).

The \emph{Haar functional} or the \emph{Haar measure} of $(\cA,\Delta)$ is a normalized positive linear functional $\h$ on $\cA$  such that
\[
\begin{aligned}
(\h\tens\id)\Delta(a)&=(\id\tens\h)\Delta(a)=\h(a)\I,&\quad{a}\in\cA.
\end{aligned}
\]
Existence of $\h$ is a classical result in Hopf algebras (\cite{LarSweed,Sweed}). The functional $\h$ is faithful (\cite{vdfin}) and consequently introduces a scalar product on $\cA$ via
\begin{equation}\label{scal}
\begin{aligned}
\is{a}{b}&=\h(a^*b),&\quad{a,b}\in\cA.
\end{aligned}
\end{equation}
It is clear from formula \eqref{scal} that we adopt the convention that scalar products are antilinear in the first variable and linear in the second. Let $\cH$ denote the finite-dimensional Hilbert space obtained by considering $\cA$ with scalar product \eqref{scal}. The algebra $\cA$ acts on $\cH$ by left multiplication and we denote by $\sA$ the image of $\cA$ in $\cL(\cH)$ in this representation. 

In what follows the algebras $\cA$ and $\sA$ will be identified and the symbols $\cA$ and $\sA$ will be used interchangeably depending on the context.

Consider the linear map
\begin{equation}\label{linear}
\cA\tens\cA\ni(a\tens{b})\longmapsto\Delta(a)(\I\tens{b})\in\cA\tens\cA
\end{equation}
It is well known that this map is invertible with inverse given by $a\tens{b}\longmapsto\bigl((\id\tens{S})\Delta(a)\bigr)(\I\tens{b})$, where $S$ is the antipode of $(\cA,\Delta)$. Let $W$ be the same map only considered as a mapping $\cH\tens\cH$. The difference is almost negligible, but it makes sense to keep the distinction. The invariance of $\h$ immediately implies that $W\in\cL(\cH\tens\cH)$ is unitary:
\[
\begin{split}
\is{W(a_1\tens{b_1})}{W(a_2\tens{b_2})}&=(\h\tens\h)\bigl((\I\tens{b_1^*})\Delta(a_1)^*\Delta(a_2)(\I\tens{b_2})\bigr)\\
&=(\h\tens\h)\bigl((\I\tens{b_1^*})\Delta(a_1^*a_2)(\I\tens{b_2})\bigr)\\
&=\h\bigl(b_1^*\bigl((\h\tens\id)\Delta(a_1^*a_2)\bigr)b_2\bigr)\\
&=\h(b_1^*b_2)\h(a_1^*a_2)=\is{a_1\tens{b_1}}{a_2\tens{b_2}}
\end{split}
\]
for all $a_1,a_2,b_1,b_2\in\cH$.

Given $a,b\in\cH$ we define $\omega_{a,b}$ to be the linear functional on $\cL(\cH)$ given by
\[
T\longmapsto\is{a}{Tb}.
\]
Such functionals span all of $\cL(\cH)^*$. Using the fact that $\cL(\cH\tens\cH)\cong\cL(\cH)\tens\cL(\cH)$ in a canonical way, we can consider the \emph{left slice} $(\omega_{a,b}\tens\id)W$ of $W$ with $\omega_{a,b}$. It is easily checked that $(\omega_{a,b}\tens\id)W$ is the operator on $\cH$ which acts as left multiplication by 
\[
(\h\tens\id)\bigl((a^*\tens\I)\Delta(b)\bigr)\in\cA.
\]
Noting that just like \eqref{linear}, the map $a\tens{b}\mapsto(a\tens\I)\Delta(a)$ is surjective (Hopf $*$-algebras are regular, \cite{mha}) we find that 
\[
\bigl\{(\omega\tens\id)W\st\omega\in\cL(\cH)^*\bigr\}=\sA.
\]

Another important feature of $W$ is that it is a \emph{multiplicative unitary}, i.e.~the \emph{pentagon equation} holds: we have
\begin{equation}\label{penta}
W_{23}W_{12}W_{23}^*=W_{12}W_{13}
\end{equation}
holds. The notation of \eqref{penta} requires explanation: we define $W_{12}=W\tens\I_\cH$, $W_{23}=\I_\cH\tens{W}$ and $W_{13}=(\Sigma\tens\I_\cH)W_{23}(\Sigma\tens\I_\cH)$, where $\Sigma$ denotes the flip map $\cH\tens\cH\ni(a\tens{b})\mapsto(b\tens{a})\in\cH\tens\cH$. This kind of notation is known as the \emph{leg numbering notation}. In what follows we will use this notation with up to five tensor factors and not only for linear operators on tensor products, but also for elements of multiplie tensor products of algebras. Multiplicative unitaries were introduced in \cite{BaajSkand} and the theory was later developed in many papers (e.g.~\cite{finmu,mu,remmu,mmu}).

The comultiplication $\Delta$ is \emph{implemented} by $W$ in the following sense: given $a\in\sA\cong\cA$ the operator on $\cH\tens\cH$ of left multiplication by $\Delta(a)$ is 
\[
W(a\tens\I)W^*.
\]
Thus transporting $\Delta$ from $\cA$ onto $\sA$ we obtain a comultiplication on $\sA$ which by \eqref{penta} satisfies
\[
(\id\tens\Delta)W=W_{12}W_{13}.
\]

Similarly, transporting the antipode $S$ onto $\sA$ we find that
\begin{equation}\label{SW}
(\id\tens{S})W=W^*.
\end{equation}
Indeed, given $a,a',b,b'\in\cH$ we can calculate
\[
\begin{split}
\is{a'}{S\bigl((\omega_{a,b}\tens\id)W\bigr)b'}&=\h\bigl({a'}^*S((\h\tens\id)((a^*\tens\I)\Delta(b)))b'\bigr)\\
&=(\h\tens\h)\bigl((a^*\tens{a'}^*)((\id\tens{S})\Delta(b))(\I\tens{b'})\bigr)\\
&\is{a'}{(\omega_{a,b}\tens\id)(W^*)b'}.
\end{split}
\]

Let
\[
\hh{\sA}=\bigl\{(\id\tens\omega)W\st\omega\in\cL(\cH)^*\bigr\}.
\]
Then $\hh{\sA}$ is a vector subspace of $\cL(\cH)$ and $W\in\hh{\sA}\tens\sA$. In particular, as $\sA\cong\cA$, the space $\hh{\sA}$ is equal to $\bigl\{(\id\tens\omega)W\st\omega\in\cA^*\bigr\}$. Moreover
\[
\cG\colon\cA^*\ni\ph\longmapsto(\id\tens\ph)W\in\hh{\sA}
\]
is an isomorphism of vector spaces. Indeed, it is clearly surjective and $(\id\tens\ph)W=0$ implies $\ph\bigl((\id\tens\omega)W\bigr)=0$ for all $\omega\in\cL(\cH)^*$, so $\ph=0$.

As $\cA$ is a Hopf $*$-algebra, so is $\cA^*$ (this is the \emph{dual Hopf $*$-algebra} of $(\cA,\Delta)$, \cite{dual}, cf.~also \cite[Section 4]{afgd}). It turns out that the linear map $\cG$ is also an algebra isomorphism: take $\ph,\psi\in\cA$. Then
\[
\cG(\ph\cdot\psi)=(\id\tens\ph\cdot\psi)W=(\id\tens\ph\tens\psi)(\id\tens\Delta)W=(\id\tens\ph\tens\psi)W_{12}W_{13}=\cG(\ph)\cG(\psi).
\]
Furthermore $\cG$ is a $*$-map. Recall that the involution on $\cA^*$ is defined by $\ph^*(a)=\overline{\ph\bigl(S(a)^*\bigr)}$. Thus, using \eqref{SW} and the fact that for any $X\in\cL(\cH\tens\cH)$
\[
\bigl((\omega_{a,b}\tens\id)X\bigr)^*=(\omega_{b,a}\tens\id)(X^*),
\]
we compute for $a,b\in\cH$
\[
\begin{split}
\is{a}{\bigl((\id\tens\ph^*)W\bigr)b}&=\ph^*\bigl((\omega_{a,b}\tens\id)W\bigr)\\
&=\overline{\ph\bigl(S((\omega_{a,b}\tens\id)W)^*\bigr)}\\
&=\overline{\ph\bigl(((\omega_{a,b}\tens\id)(W^*))^*\bigr)}\\
&=\overline{\ph\bigl((\omega_{b,a}\tens\id)W\bigr)}\\
&=\overline{\is{b}{\bigl((\id\tens\ph)W\bigr)a}}\\
&=\is{\bigl((\id\tens\ph)W\bigr)a}{b}\\
&=\is{a}{\bigl((\id\tens\ph)W\bigr)^*b},
\end{split}
\]
so $\cG(\ph^*)=\cG(\ph)^*$ (we used the fact that $S$ is a $*$-map).

The coproduct of $\cA^*$ is defined by $\ph\mapsto\ph\comp\mu$, where $\mu\colon\cA\tens\cA\to\cA$ is the multiplication map (we are using the canonical isomorphism $(\cA\tens\cA)^*\cong\cA^*\tens\cA^*$). This map can also be transported to $\hh{\sA}$ using $\cG$: we define $\hh{\Delta}\colon\hh{\sA}\to\hh{\sA}\tens\hh{\sA}$ by
\begin{equation}\label{Delhat1}
\hh{\Delta}\bigl((\id\tens\ph)W\bigr)=(\cG\tens\cG)(\ph\comp\mu).
\end{equation}
To compute the right hand side of \eqref{Delhat1} note that for any $\psi_1,\psi_2\in\cA^*$ we have
\[
(\cG\tens\cG)(\psi_1\tens\psi_2)=(\id\tens\psi_1\tens\id\tens\psi_2)(W\tens{W}).
\]
By linearity this means that for $\Phi\in\cA^*\tens\cA^*$
\[
(\cG\tens\cG)(\Psi)=(\id\tens\id\tens\Phi)(\id\tens\sigma\tens\id)(W\tens{W}),
\]
where $\sigma$ is the flip $\sA\tens\hh{\sA}\to\hh{\sA}\tens\sA$. Thus, using \eqref{penta} we find that
\[
\begin{split}
(\cG\tens\cG)(\ph\comp\mu)&=(\id\tens\id\tens\ph)(W_{13}W_{23})\\
&=(\id\tens\id\tens\ph)(W_{12}^*W_{23}W_{12})\\
&=W^*\bigl(\I\tens[(\id\tens\ph)W]\bigr)W.
\end{split}
\]
It follows that
\begin{equation}\label{Delhat2}
\hh{\Delta}(x)=W^*(\I\tens{x})W
\end{equation}
for all $x\in\hh{\sA}$ (in particular the right hand side of \eqref{Delhat2} belongs to $\hh{\sA}\tens\hh{\sA}$). By the pentagon equation \eqref{penta} we have
\[
(\hh{\Delta}\tens\id)W=W_{13}W_{23}.
\]

\section{Actions of compact quantum groups on finite-dimensional algebras}

Let $\cA$ be a finite-dimensional \cst-algebra and let $\HH$ be a compact quantum group. An \emph{action} of $\HH$ on $\cA$ is a unital $*$-homomorphism $\alpha\colon\cA\to\cA\tens\C(\HH)$ such that
\[
(\alpha\tens\id)\comp\alpha=(\id\tens\Delta_\HH)\comp\alpha,
\]
where $\Delta_\HH$ is to comultiplication on $\C(\HH)$ and
\[
\alpha(\cA)\bigl(\I\tens\C(\HH)\bigr)=\cA\tens\C(\HH).
\]
The theory of action of compact quantum groups on \cst-algebras is by now very well established (see e.g.~\cite{Sym,boca} or \cite[Section 6]{PodMu}). A very important part of the theory is concerned with the purely algebraic aspects of such an action. In \cite{Sym} existence of a certain dense $*$-subalgebra of a \cst-algebra carrying an action of a compact quantum group was shown. This subalgebra is sometimes referred to as the \emph{algebraic core} or \emph{Podle\'s core} of the action (\cite{dz}). In our case, with $\cA$ finite-dimensional, the Podle\'s core of the action $\alpha$ is equal to $\cA$ and consequently the image of $\alpha$ is contained in the (algebraic) tensor product of $\cA$ with $\Pol(\HH)$, the unique dense Hopf $*$-subalgebra of $\C(\HH)$ (\cite[Theorem 2.2]{cqg}, \cite[Theorem 5.1]{bmt}). In particular we can apply $\id\tens{S_\HH}$ to $\alpha(a)$ for any $a\in\cA$.

Let $\rho$ be a state on $\cA$. We say that $\rho$ is \emph{invariant} for the action $\alpha$ if
\[
\begin{aligned}
(\rho\tens\id)\alpha(a)&=\rho(a)\I,&\quad{a}\in\cA.
\end{aligned}
\]

In the next section we will need the following result which by analogy with \cite[Remark 1.8]{mnw} may be called \emph{strong right invariance} of an invariant state (cf.~also \cite[Formula (1.3)]{bhss}).

\begin{lemma}\label{strinv}
Let $\alpha$ be an action of a compact quantum group $\HH$ on a finite-dimensional \cst-algebra $\cA$. Let $\h$ be a state on $\cA$ invariant for $\alpha$. Further let $S_\HH$ be the antipode of $\HH$. Then
\[
(\h\tens\id)\bigl(\alpha(a)(b\tens\I)\bigr)=(\h\tens\id)\bigl((a\tens\I)((\id\tens{S_\HH})\alpha(b))\bigr)
\]
for all $a,b\in\cA$.
\end{lemma}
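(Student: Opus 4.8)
The plan is to reduce the stated identity to a purely algebraic computation inside $\cA\tens\Pol(\HH)$, which is legitimate because $\cA$ is finite-dimensional and hence, as noted above, $\alpha(\cA)\subseteq\cA\tens\Pol(\HH)$; in particular the counit $\epsilon_\HH$ and antipode $S_\HH$ of $\HH$ are at our disposal, together with all the Hopf $*$-algebra axioms. Writing $\alpha(a)=\sum a_{(0)}\tens a_{(1)}$ and $\alpha(b)=\sum b_{(0)}\tens b_{(1)}$ in Sweedler notation (with $a_{(0)},b_{(0)}\in\cA$ and $a_{(1)},b_{(1)}\in\Pol(\HH)$), the left-hand side of the lemma equals $\sum\h(a_{(0)}b)\,a_{(1)}$ and the right-hand side equals $\sum\h(ab_{(0)})\,S_\HH(b_{(1)})$, so that the assertion amounts to the identity
\begin{equation}\label{goal}
\sum\h(a_{(0)}b)\,a_{(1)}=\sum\h(ab_{(0)})\,S_\HH(b_{(1)}),\qquad a,b\in\cA.
\end{equation}

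The first step is to record an auxiliary identity combining the two hypotheses on $\alpha$. Since $\alpha$ is a $*$-homomorphism we have $\alpha(a)\alpha(b)=\alpha(ab)$, and applying $\h\tens\id$ together with invariance of $\h$ gives
\begin{equation}\label{aux}
\sum\h(a_{(0)}b_{(0)})\,a_{(1)}b_{(1)}=\h(ab)\,\I,\qquad a,b\in\cA.
\end{equation}

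The second step is the main computation, starting from the left-hand side of \eqref{goal}. I would first substitute $b=(\id\tens\epsilon_\HH)\alpha(b)=\sum b_{(0)}\epsilon_\HH(b_{(1)})$, then apply the antipode axiom $\epsilon_\HH(x)\I=\sum x_{(1)}S_\HH(x_{(2)})$ on the leg $b_{(1)}$ to manufacture a factor $S_\HH$, and finally invoke the coaction axiom $(\id\tens\Delta_\HH)\alpha=(\alpha\tens\id)\alpha$ to transport the resulting coproduct from $\Pol(\HH)$ back onto $\cA$. Writing $\sum b_{(0)}\tens b_{(1)}\tens b_{(2)}$ for the common value of the two sides of this axiom, the expression is thereby brought to the form $\sum\bigl[\h(a_{(0)}b_{(0)})\,a_{(1)}b_{(1)}\bigr]S_\HH(b_{(2)})$, in which the bracketed part is precisely the left-hand side of \eqref{aux}. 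Collapsing it via \eqref{aux} leaves exactly $\sum\h(ab_{(0)})\,S_\HH(b_{(1)})$, the right-hand side of \eqref{goal}.

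The conceptual crux, and the step most likely to demand care, is the bookkeeping in this reorganisation: one must arrange the chain so that after inserting $S_\HH$ through the counit the three surviving legs are exactly those produced by $(\alpha\tens\id)\alpha(b)$, which is what allows \eqref{aux} to be applied. There are no analytic difficulties here, because finite-dimensionality ensures that $\alpha$ lands in the algebraic tensor product and the manipulations are genuinely those of Hopf $*$-algebras; the only thing to watch is the consistent use of Sweedler notation under the coaction and antipode axioms.
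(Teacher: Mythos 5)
Your proof is correct and takes essentially the same route as the paper's: your counit/antipode/coassociativity manipulation is precisely an inline derivation, applied to $b\tens\I$, of the standard fact the paper simply cites — that $a\tens{c}\mapsto\alpha(a)(\I\tens{c})$ and $a\tens{c}\mapsto\bigl((\id\tens{S_\HH})\alpha(a)\bigr)(\I\tens{c})$ are mutually inverse maps of $\cA\tens\Pol(\HH)$. From there both arguments conclude identically, by combining multiplicativity of $\alpha$ with invariance of $\h$.
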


\begin{proof}
It is a standard result that the maps
\[
\begin{split}
\cA\tens\Pol(\HH)\ni{a}\tens{c}&\longmapsto\alpha(a)(\I\tens{c})\in\cA\tens\Pol(\HH),\\
\cA\tens\Pol(\HH)\ni{a}\tens{c}&\longmapsto\bigl((\id\tens{S_\HH})\alpha(a)\bigr)(\I\tens{c})\in\cA\tens\Pol(\HH)
\end{split}
\]
(between algebraic tensor products) are mutually inverse isomorphisms of vector spaces. Therefore for any $b\in\cA$ we can find $x_1,\dotsc,x_n\in\cA$ and $y_1,\dotsc,y_n\in\Pol(\HH)$ such that
\begin{equation}\label{S1}
b\tens\I=\sum_i\alpha(x_i)(\I\tens{y_i})
\end{equation}
i.e.
\begin{equation}\label{S2}
\sum_ix_i\tens{y_i}=(\id\tens{S_\HH})\alpha(b).
\end{equation}

Thus, by \eqref{S1}, invariance of $\h$ and \eqref{S2}
\[
\begin{split}
(\h\tens\id)\bigl(\alpha(a)(b\tens\I)\bigr)&=(\h\tens\id)\sum_i\bigl(\alpha(a)\alpha(x_i)(\I\tens{y_i})\bigr)\\
&=(\h\tens\id)\sum_i\bigl(\alpha(ax_i)(\I\tens{y_i})\bigr)\\
&=\sum_i(\h\tens\id)\bigl(\alpha(ax_i)\bigr)y_i\\
&=\sum_i\h(ax_i)y_i\\
&=(\h\tens\id)\biggl((a\tens\I)\sum_ix_i\tens{y_i}\biggr)\\
&=(\h\tens\id)\bigl((a\tens\I)((\id\tens{S_\HH})\alpha(b))\bigr).
\end{split}
\]

\end{proof}

\section{Commutativity results}

\begin{theorem}
\label{slw}
Let $\sC$ be a unital \cst-algebra and let $\beta\colon\sA\to\sC\tens\sA$ and $\gamma\colon\hh{\sA}\to\hh{\sA}\tens\sC$ be unital $*$-homomorphisms such that
\[
(\id\tens\beta)W=(\gamma\tens\id)W.
\]
Then the algebra generated inside $\sC$ by
\[
\bigl\{(\id\tens\rh)\beta(a)\st\rh\in\sA^*,\:a\in\sA\bigr\}
\]
is commutative.
\end{theorem}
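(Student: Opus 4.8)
The plan is to reduce everything to a single object, the unitary
\[
V:=(\id\tens\beta)W=(\gamma\tens\id)W\in\hh{\sA}\tens\sC\tens\sA,
\]
which is a unitary because it is the image of the unitary $W$ under the unital $*$-homomorphism $\gamma\tens\id$ (equivalently $\id\tens\beta$). I first recast the generators. Writing an arbitrary $a\in\sA$ as $a=(\omega\tens\id)W$ gives $\beta(a)=(\omega\tens\id\tens\id)V$, so every generator is a two-legged slice $(\id\tens\rh)\beta(a)=(\omega\tens\id\tens\rh)V$ of $V$ along the first and third legs. Consequently, if I place two copies of $V$ inside $\hh{\sA}\tens\hh{\sA}\tens\sC\tens\sA\tens\sA$ so that they share only the middle ($\sC$) leg, namely $V_{135}$ and $V_{234}$, then slicing away the four outer legs recovers the product $g_1g_2$ of two generators from $V_{135}V_{234}$ and $g_2g_1$ from $V_{234}V_{135}$. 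Hence it suffices to show that $V_{135}$ and $V_{234}$ commute. Since $\beta$ is a $*$-homomorphism, the generating set is stable under the $*$-operation, so pairwise commutativity of all generators forces the generated $*$-algebra to be commutative.

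The core of the argument is to extract two complementary multiplicativity relations for $V$ and then combine them. Because $\hh{\Delta}$ acts on the first leg of $W$ and $\beta$ on its second leg, these operations commute; using $V=(\id\tens\beta)W$, that $\beta$ is a homomorphism, and $(\hh{\Delta}\tens\id)W=W_{13}W_{23}$, I obtain the first relation, and dually, using $V=(\gamma\tens\id)W$, that $\gamma$ is a homomorphism, and $(\id\tens\Delta)W=W_{12}W_{13}$, I obtain the second:
\[
(\hh{\Delta}\tens\id\tens\id)V=V_{134}V_{234},\qquad(\id\tens\id\tens\Delta)V=V_{123}V_{124}.
\]
It is exactly the hypothesis $(\id\tens\beta)W=(\gamma\tens\id)W$ that makes \emph{both} of these available for the same $V$; this double availability is what drives the proof.

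Next I would compute the element
\[
M:=(\hh{\Delta}\tens\id\tens\Delta)V\in\hh{\sA}\tens\hh{\sA}\tens\sC\tens\sA\tens\sA
\]
in the two possible orders. Applying $\hh{\Delta}$ first and then $\Delta$, and substituting the two relations above (together with the single-$V$ splitting of each leg), yields $M=V_{134}V_{135}V_{234}V_{235}$, while applying $\Delta$ first and then $\hh{\Delta}$ yields $M=V_{134}V_{234}V_{135}V_{235}$. Equating the two expressions gives
\[
V_{134}V_{135}V_{234}V_{235}=V_{134}V_{234}V_{135}V_{235},
\]
and cancelling the unitary $V_{134}$ on the left and the unitary $V_{235}$ on the right leaves $V_{135}V_{234}=V_{234}V_{135}$, which is precisely the commutation needed to conclude via the reduction of the first paragraph.

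The only genuinely non-formal input is the final cancellation, which relies on $V$ (hence each leg embedding $V_{ijk}$) being a unitary; everything else is a manipulation of the two coproduct relations. Accordingly, I expect the main obstacle to be bookkeeping: keeping the leg positions straight as $\hh{\Delta}$ and $\Delta$ each insert a new tensor leg, and checking that the single-leg splittings substitute correctly in both orders so that the interchange produces the two four-factor words displayed above. Once the leg numbering is handled carefully, the argument is essentially a ``middle-four-interchange'' forced by the compatibility hypothesis and the pentagon equation.
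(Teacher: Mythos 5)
Your proposal is correct and is essentially the paper's own proof: you introduce the same element $V=(\id\tens\beta)W=(\gamma\tens\id)W$, derive the same two relations $(\hh{\Delta}\tens\id\tens\id)V=V_{134}V_{234}$ and $(\id\tens\id\tens\Delta)V=V_{123}V_{124}$, compute $(\hh{\Delta}\tens\id\tens\Delta)V$ in both orders to force $V_{135}V_{234}=V_{234}V_{135}$, and then slice the outer legs to get commutation of the generators. The only differences are cosmetic: you make explicit the unitarity of $V$ behind the cancellation step and the $*$-stability of the generating set, both of which the paper leaves implicit.
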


\begin{proof}
Let $V=(\id\tens\beta)W$. Note that the element $V\in\hh{\sA}\tens\sC\tens\sA$ satisfies
\[
(\hh{\Delta}\tens\id\tens\id)V=(\hh{\Delta}\tens\beta)W=(\id\tens\id\tens\beta)(W_{13}W_{23})=V_{134}V_{234}
\]
and
\[
(\id\tens\id\tens\Delta)V=(\gamma\tens\Delta)W=(\gamma\tens\id\tens\id)(W_{12}W_{13})=V_{123}V_{124}.
\]

Let us compute $(\hh{\Delta}\tens\id\tens\Delta)V$. On one hand
\[
(\hh{\Delta}\tens\id\tens\Delta)V=(\hh{\Delta}\tens\id\tens\id\tens\id)(V_{123}V_{124})=V_{134}V_{234}V_{135}V_{235}
\]
and on the other
\[
(\hh{\Delta}\tens\id\tens\Delta)V=(\id\tens\id\tens\id\tens\Delta)(V_{134}V_{234})=V_{134}V_{135}V_{234}V_{235}.
\]
Therefore
\begin{equation}\label{com0}
V_{234}V_{135}=V_{135}V_{234}.
\end{equation}

Take now $\ph_1,\ph_2\in\sA^*$ and $\mu,\nu\in\sC^*$ and let us apply $(\ph_1\tens\ph_2\tens\id\tens\mu\tens\nu)$ to both sides of \eqref{com0}. Denoting
\[
a=(\ph_1\tens\id)W\quad\text{and}\quad{b}=(\ph_2\tens\id)W
\]
we obtain from \eqref{com0}
\[
\bigl((\id\tens\nu)\beta(a)\bigr)\bigl((\id\tens\mu)\beta(b)\bigr)=\bigl((\id\tens\mu)\beta(b)\bigr)\bigl((\id\tens\nu)\beta(a)\bigr)
\]
which proves the claim.
\end{proof}

Let $\HH$ be the quantum automorphism group of $(\sA,\Delta)$ defined in \cite{bhss}. We have the distinguished (right) action $\alpha\colon\sA\to\sA\tens\C(\HH)$ of $\HH$ on $\sA$. It is a fact that $\HH$ is a compact quantum group of Kac type, i.e.~its antipode $S_\HH$ is an involutive $*$-antiautomorphism of $\C(\HH)$. This follows from e.g.~\cite[Theorem 2.10]{BS} because by results \cite[Section 2]{bhss} the action of $\HH$ on $\sA$ preserves the decomposition $\sA=\CC\I\oplus(\CC\I)^\perp$, where the orthogonal complement is defined by the scalar product given by $\h$. 

Another point to note is that $\C(\HH)$ is generated as a \cst-algebra by
\begin{equation}\label{genCH}
\bigl\{(\rh\tens\id)\alpha(a)\st\rh\in\sA^*,\:a\in\sA\bigr\}.
\end{equation}
This is not explicitly stated in \cite{bhss}, but this is clear in all cases of compact quantum group defined by similar universal properties to that of $\HH$. The \cst-algebra generated by \eqref{genCH} carries all the structure making it the universal object $\C(\HH)$.

In what follows we will make use of the \emph{Fourier transform} on the finite quantum group $(\cA,\Delta)$ defined as
\[
\cF\colon\cA\ni{a}\longmapsto\h(\cdot\,{a})\in\cA^*.
\]
The Fourier transform is an isomorphism of vector spaces with various additional properties. Note that $(\cG\comp\cF)(a)=\bigr(\id\tens\h(\cdot\,{a})\bigr)W$.

The results of \cite[Section 2]{bhss} show that the map $(\cF\tens\id)\comp\alpha\comp\cF^{-1}\colon\cA^*\to\cA^*\tens\C(\HH)$ is a unital $*$-homomorphism (in fact it is an \emph{action by automorphisms} of the quantum group dual to $(\cA,\Delta)$). This yields a unital $*$-homomorphism $\gamma$
\[
\gamma=(\cG\tens\id)\comp(\cF\tens\id)\comp\alpha\comp\cF^{-1}\comp\cG^{-1}\colon\hh{\sA}\longrightarrow\hh{\sA}\tens\C(\HH).
\]

Finally, since $S_\HH$ is a $*$-antiautomorphism (as is the antipode $S$ of $(\sA,\Delta)$) we can define a unital $*$-homomorphism $\gamma\colon\sA\to\C(\HH)\tens\sA$ by
\[
\beta=\sigma\comp(S\tens{S_\HH})\comp\alpha\comp{S},
\]
where $\sigma$ denotes this time the flip $\sA\tens\C(\HH)\to\C(\HH)\tens\sA$. By \cite[Lemma 2.2 and Proposition 2.3]{bhss} we have in fact
\begin{equation}\label{betaS}
\beta=\sigma\comp(\id\tens{S_\HH})\comp\alpha.
\end{equation}

Let us also note that
\[
\bigl\{(\id\tens\rh)\beta(a)\st\rh\in\sA^*,\:a\in\sA\bigr\}
\]
generates $\C(\HH)$ (it is exactly \eqref{genCH}).

\begin{theorem}\label{main}
We have
\[
(\id\tens\beta)W=(\gamma\tens\id)W.
\]
\end{theorem}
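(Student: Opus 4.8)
The plan is to strip the multiplicative unitary $W$ of its first leg by means of the isomorphism $\cG\comp\cF\colon\cA\to\hh{\sA}$, thereby converting the asserted identity into a purely algebraic statement about the action $\alpha$ which turns out to be the strong invariance of Lemma~\ref{strinv}. First I would observe that, straight from the definition of $\gamma$,
\[
\gamma\comp(\cG\comp\cF)=\bigl((\cG\comp\cF)\tens\id\bigr)\comp\alpha.
\]
Writing $\Theta=\cG\comp\cF$ and $Z=(\Theta^{-1}\tens\id)W\in\cA\tens\sA$, so that $W=(\Theta\tens\id)Z$, the two sides of the theorem become $(\Theta\tens\id\tens\id)(\id\tens\beta)Z$ and $(\Theta\tens\id\tens\id)(\alpha\tens\id)Z$ respectively (here $\alpha$ acts on the first and $\beta$ on the second leg of $Z$). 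Since $\Theta\tens\id\tens\id$ is injective, the theorem is equivalent to
\[
(\id\tens\beta)Z=(\alpha\tens\id)Z\qquad\text{in }\cA\tens\C(\HH)\tens\sA.
\]

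Next I would identify $Z$. Because $(\cG^{-1}\tens\id)W$ is the canonical element $\sum_i e^i\tens e_i\in\cA^*\tens\cA$ (applying $\cG\tens\id$ returns $W$), applying $\cF^{-1}$ to the first leg shows that $Z$ is the copairing dual to the Haar form; concretely $Z$ is characterized by $(\h\tens\id)\bigl((c\tens\I)Z\bigr)=c$ and $(\id\tens\h)\bigl((\I\tens d)Z\bigr)=d$ for all $c,d\in\cA$. As the bilinear forms $(c,x)\mapsto\h(cx)$ and $(x,d)\mapsto\h(xd)$ are nondegenerate, the displayed identity may be tested by slicing its first leg with the functionals $\h(c\,\cdot)$ and its last leg with $\h(\cdot\,d)$, $c,d\in\cA$.

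Carrying out these slices (and writing $\alpha(x)=\sum x_{(0)}\tens x_{(1)}$), the reproducing property together with linearity of $\beta$ collapses the left-hand side to $\beta(c)$ and then, upon pairing the remaining $\sA$-leg with $\h(\cdot\,d)$, to $\sum\h(c_{(0)}d)S_\HH(c_{(1)})$; symmetrically the right-hand side collapses to $\alpha(d)$ and then to $\sum\h(cd_{(0)})d_{(1)}$. Thus the whole statement reduces to the identity
\[
\textstyle\sum\h(c_{(0)}d)S_\HH(c_{(1)})=\sum\h(cd_{(0)})d_{(1)},\qquad c,d\in\cA.
\]

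Finally I would derive this from Lemma~\ref{strinv}. Expanded in Sweedler notation that lemma reads $\sum\h(c_{(0)}d)c_{(1)}=\sum\h(cd_{(0)})S_\HH(d_{(1)})$; applying $S_\HH$ to both sides and invoking that $\HH$ is of Kac type, i.e.\ $S_\HH^2=\id$, yields precisely the required equality. The essential point, and the main obstacle, is this last step: the theorem is strong invariance seen through the involutivity of the Kac-type antipode, while the earlier manipulations merely transport the multiplicative-unitary identity onto the Haar duality element $Z$. Everything before it is bookkeeping with the identifications $\cA\cong\sA$ and the various legs, which must be tracked carefully but presents no conceptual difficulty.
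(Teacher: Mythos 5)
Your proof is correct, and its skeleton is the same as the paper's: both arguments rest on the intertwining relation $\gamma\comp(\cG\comp\cF)=\bigl((\cG\comp\cF)\tens\id\bigr)\comp\alpha$, on formula \eqref{betaS} for $\beta$, on slicing the two outer legs of the claimed identity with Haar-type functionals, and on reducing the resulting identity to Lemma \ref{strinv}; indeed your reduced identity $\sum\h(c_{(0)}d)S_\HH(c_{(1)})=\sum\h(cd_{(0)})d_{(1)}$ is exactly the paper's \eqref{equiv} after renaming $a=d$, $b=c$. Where you genuinely differ is the closing step: the paper deduces \eqref{equiv} from Lemma \ref{strinv} by commuting factors under $\h$, i.e.\ by invoking that the Haar state of a finite quantum group is a trace (\cite[Proposition A.2.2]{pseudogr}), whereas you apply $S_\HH$ to both sides of the lemma and use only that $\HH$ is of Kac type, $S_\HH^2=\id$, a fact the paper establishes (via \cite[Theorem 2.10]{BS}) before the theorem in any case. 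Both supplementary facts are available in context, so both finishes are legitimate; yours buys independence from traciality of $\h$ at this point. One caveat: your stated reproducing property $(\id\tens\h)\bigl((\I\tens d)Z\bigr)=d$ is, as written, itself equivalent to traciality of $\h$; what your slicing with $\h(\cdot\,d)$ actually uses is $(\id\tens\h)\bigl(Z(\I\tens d)\bigr)=d$, which holds unconditionally (test against the functionals $\h(x\,\cdot)$ and use nondegeneracy). Since $\h$ is in fact a trace this causes no error, but if you want the proof genuinely to bypass traciality you should state that property with $d$ multiplying on the right.
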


\begin{proof}
As $\cG\comp\cF$ appears quite often in our considerations let us denote this composition by $\varGamma$. Take $a\in\cA$ and let us compute
\[
\begin{split}
\bigl(\id\tens\id\tens\h(\cdot\,a)\bigr)
\bigl((\gamma\tens\id)W\bigr)&=\gamma\bigl((\id\tens\h(\cdot\,a))W\bigr)\\
&=\bigl(\varGamma\tens\id\bigr)\bigl(\alpha(a)\bigr)\\
&=(\id\tens\h\tens\id)\bigl(W_{12}\alpha(a)_{23}\bigr).
\end{split}
\]
Similarly, using \eqref{betaS} we find that
\[
\bigl(\id\tens\id\tens\h(\cdot\,a)\bigr)\bigl((\id\tens\beta)W\bigr)
=(\id\tens\h\tens\id)\bigl((\id\tens\id\tens{S_\HH}))(\id\tens\alpha)W)(\I\tens{a}\tens\I)\bigr).
\]

Thus the proof will be complete when we show that
\begin{equation}\label{complete}
(\id\tens\h\tens\id)\bigl(W_{12}\alpha(a)_{23}\bigr)=
(\id\tens\h\tens\id)\bigl((\id\tens\id\tens{S_\HH}))(\id\tens\alpha)W)(\I\tens{a}\tens\I)\bigr)
\end{equation}
for all $a\in\cA$.

Let us slice the first leg of both sides of \eqref{complete} with a functional $\h(\cdot\,b)$ ($b\in\cA$). We obtain
\begin{equation}\label{equiv}
(\h\tens\id)\bigl((b\tens\I)\alpha(a)\bigr)=(\h\tens\id)\bigl(((\id\tens{S_\HH})\alpha(b))(a\tens\I)\bigr).
\end{equation}
Note that \eqref{equiv} for all $a,b\in\cA$ is equivalent to \eqref{complete} for all $a\in\cA$. Therefore our theorem follows from Lemma \ref{strinv} and the fact that $\h$ is a trace (\cite[Proposition A.2.2]{pseudogr}).
\end{proof}

\begin{corollary}
Let $\GG=(\cA,\Delta)$ be a finite quantum group. Then the quantum group of automorphisms of $\GG$ defined in \cite{bhss} is the classical group of Hopf $*$-algebra automorphisms of $\cA$. In particular any compact quantum group acting faithfully on $\GG$ is in fact a classical group.
\end{corollary}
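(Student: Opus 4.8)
The plan is to feed the conclusion of Theorem \ref{main} into Theorem \ref{slw}. Indeed, Theorem \ref{main} asserts exactly that the $*$-homomorphisms $\beta\colon\sA\to\C(\HH)\tens\sA$ and $\gamma\colon\hh{\sA}\to\hh{\sA}\tens\C(\HH)$ built from the distinguished action $\alpha$ of $\HH$ satisfy $(\id\tens\beta)W=(\gamma\tens\id)W$. This is precisely the hypothesis of Theorem \ref{slw} with $\sC=\C(\HH)$. Applying that theorem, the $*$-subalgebra of $\C(\HH)$ generated by
\[
\bigl\{(\id\tens\rh)\beta(a)\st\rh\in\sA^*,\:a\in\sA\bigr\}
\]
is commutative.

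Next I would observe that this generating set is literally the set \eqref{genCH}, as recorded just before Theorem \ref{main}, and that \eqref{genCH} generates $\C(\HH)$ as a \cst-algebra. Hence the commutative $*$-algebra produced by Theorem \ref{slw} is norm-dense in $\C(\HH)$, and since commutativity passes to the norm closure, $\C(\HH)$ is itself commutative. By Gelfand duality a compact quantum group with commutative algebra of functions is an ordinary compact group, so $\C(\HH)=\C(H)$ for some compact group $H$ whose coproduct is dual to the multiplication of $H$. It then remains to read off the universal property of $\HH$ from \cite{bhss}: an action of the \emph{classical} group $H$ on $\GG$ by automorphisms is nothing but a continuous homomorphism of $H$ into the (compact) group $\mathrm{Aut}(\cA,\Delta)$ of Hopf $*$-algebra automorphisms of $(\cA,\Delta)$, and the universality of $\HH$ identifies $H$ with $\mathrm{Aut}(\cA,\Delta)$ itself. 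Thus $\HH$ is exactly the classical automorphism group of $\GG$.

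For the final assertion, let $\mathbb{K}$ be any compact quantum group acting faithfully on $\GG$ by automorphisms. The universal property of $\HH$ supplies a morphism of compact quantum groups from $\HH$ to $\mathbb{K}$ intertwining the two actions, and faithfulness of the action of $\mathbb{K}$ makes the corresponding \cst-homomorphism $\C(\HH)\to\C(\mathbb{K})$ surjective; a quotient of the commutative algebra $\C(\HH)$ is again commutative, so $\mathbb{K}$ is classical. The only genuine care at the level of this corollary lies in matching the abstract generating set of Theorem \ref{slw} with the concrete generators \eqref{genCH} and in invoking the universal property correctly to pin down $H$ as the full Hopf $*$-algebra automorphism group; the substantive work having already been carried out in the two preceding theorems, no serious obstacle remains here.
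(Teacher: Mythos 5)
Your proposal is correct and follows essentially the same route as the paper: verify the hypothesis of Theorem \ref{slw} via Theorem \ref{main} with $\sC=\C(\HH)$, note that the slices of $\beta$ are exactly the generating set \eqref{genCH} of $\C(\HH)$, conclude commutativity, and settle the final assertion through the universal property of $\HH$ (the paper cites \cite[Proposition 2.7]{bhss} for precisely the faithfulness-implies-quotient argument you spell out). Your write-up merely makes explicit some steps the paper leaves implicit, such as commutativity passing to the norm closure and the Gelfand-duality identification of $\HH$ with the classical automorphism group.
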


\begin{proof}
As we noted just before stating Theorem \ref{main}, the slices of $\beta$ generate the \cst-algebra $\C(\HH)$, where $\HH$ is the quantum group of automorphisms of $\GG$. By Theorem \ref{slw} this algebra is commutative. The last statement of the theorem follows from the universal property of the quantum group $\HH$ and \cite[Proposition 2.7]{bhss}.
\end{proof}

\section*{Acknowledgements}

The authors would like to thank Biswarup Das, Uwe Franz and Adam Skalski for many stimulating discussions on the topic of quantum groups of automorphisms of finite quantum groups.

\end{document}